\definecolor{labelkey}{rgb}{.1,.1,.8}
\definecolor{refkey}{rgb}{0,0.6,0.0}
\setlist[enumerate]{itemsep = 0.5pt,topsep=2pt}
\definecolor{dgreen}{rgb}{0.00,0.49,0.00}
\definecolor{dblue}{rgb}{0,0.08,0.75}
\colorlet{myblue}{dblue}
\colorlet{mygreen}{dgreen}
\definecolor{myfirstblue}{rgb}{.8, .8, 1}
\newcommand*\mybluebox[1]{%
    \colorbox{RoyalBlue!20}{\hspace{1em}#1\hspace{1em}}}
\crefname{equation}{}{}
\crefname{chapter}{Appendix}{chapters}
\crefname{item}{}{items}
\crefname{figure}{Figure}{Figures}
\crefname{theorem}{\protect\theoremname}{Theorems}
\crefname{lemma}{\protect\lemmaname}{\protect\lemmaname}
\crefname{proposition}{\protect\propositionname}{\protect\propositionname}
\crefname{corollary}{\protect\corollaryname}{\protect\corollaryname}
\crefname{definition}{\protect\definitionname}{\protect\definitionname}
\crefname{fact}{\protect\factname}{\protect\factname}
\crefname{example}{\protect\examplename}{\protect\examplename}
\crefname{algorithm}{Algorithm}{Algorithms}
\crefname{remark}{\protect\remarkname}{\protect\remarkname}
\crefname{case}{\protect\casename}{\protect\casename}
\crefname{question}{\protect\questionname}{\protect\questionname}
\crefname{claim}{\protect\claimname}{\protect\claimname}
\crefname{enumi}{}{}
\crefname{appsec}{Appendix}{Appendices}
\g@addto@macro\normalsize{%
  \setlength\abovedisplayskip{6pt}
  \setlength\belowdisplayskip{6pt}
  \setlength\abovedisplayshortskip{6pt}
  \setlength\belowdisplayshortskip{6pt}
}
\let\orgdescriptionlabel\descriptionlabel
\renewcommand*{\descriptionlabel}[1]{%
	\let\orglabel\label
	\let\label\@gobble
	\phantomsection
	\edef\@currentlabel{#1}%
	\let\label\orglabel
	\orgdescriptionlabel{#1}%
}
\let\geq\geqslant
\def\th@plain{%
	\thm@notefont{} 
	\itshape 
}
\def\th@definition{%
	\thm@notefont{}
	\normalfont 
}
\g@addto@macro\th@remark{\thm@headpunct{}}
\g@addto@macro\th@definition{\thm@headpunct{}}
\g@addto@macro\th@plain{\thm@headpunct{}}
\theoremstyle{plain}
\newtheorem{theorem}{\protect\theoremname}[section]
\newtheorem{proposition}[theorem]{\protect\propositionname}
\theoremstyle{definition}
\newtheorem{remark}[theorem]{\protect\remarkname}
\newtheorem{example}[theorem]{\protect\examplename}
\newtheorem{fact}[theorem]{\protect\factname}
\theoremstyle{remark}
\providecommand{\theoremname}{Theorem}
\providecommand{\propositionname}{Proposition}
\providecommand{\corollaryname}{Corollary}
\providecommand{\factname}{Fact}
\providecommand{\lemmaname}{Lemma}
\providecommand{\assumptionname}{Assumption}
\providecommand{\algorithmname}{Algorithm}
\providecommand{\definitionname}{Definition}
\providecommand{\notationname}{Notation}
\providecommand{\remarkname}{Remark}
\providecommand{\examplename}{Example}
\providecommand{\claimname}{Claim}
\providecommand{\algorithmname}{Algorithm}
\providecommand{\openprobname}{Open Problem}
\let\originalleft\left
\let\originalright\right
\renewcommand{\left}{\mathopen{}\mathclose\bgroup\originalleft}
\renewcommand{\right}{\aftergroup\egroup\originalright}
\DeclarePairedDelimiter{\norm}{\lVert}{\rVert}
\DeclarePairedDelimiterX{\scal}[2]{\langle}{\rangle}{  #1 \, \delimsize \vert \, \mathopen{}  #2  }
\DeclarePairedDelimiterX{\fnorm}[1]{\lVert}{\rVert_\ensuremath{\mathsf{F}}}{#1}
\DeclarePairedDelimiterX\menge[2]{ \{ }{ \} }{ {#1} ~ \delimsize \vert ~ \mathopen{}  {#2} }  
\DeclarePairedDelimiterX\fa[2]{ ( }{ )_{#2} }{#1}  
\DeclarePairedDelimiterX\set[2]{ \{ }{ \}_{#2} }{#1}  
\newcommand{\mmenge}[2]{\bigg\{{#1}~\bigg |~{#2}\bigg\}}
\newcommand{\NN}{\ensuremath{\mathbb N}}
\newcommand{\minf}{\ensuremath{ {-}\infty}}
\newcommand{\pinf}{\ensuremath{ {+}\infty}}
\newcommand{\RX}{\ensuremath{ \left] \minf, \pinf \right] }}
\newcommand{\zeroun}{\ensuremath{\left]0,1\right[}}
\newcommand{\Fix}{\ensuremath{\operatorname{Fix}}}
\newcommand{\Id}{\ensuremath{\operatorname{Id}}}
\newcommand{\prox}[1]{\ensuremath{\mathop{\operatorname{Prox}_{#1}}}}
\newcommand{\zer}{{\ensuremath{\operatorname{zer\,}}}} 
\newcommand{\weakly}{{\ensuremath{\,\rightharpoonup\,}}} 
\newcommand{\To}{{\ensuremath{\,\rightrightarrows\,}}} 
\newcommand{\email}[1]{\href{mailto:#1}{\nolinkurl{#1}}} 
\DeclareSymbolFont{italics}{\encodingdefault}{\rmdefault}{m}{it}
\DeclareMathSymbol{f}{\mathalpha}{italics}{`f}
\let\oldFootnote\footnote
\newcommand\nextToken\relax
\renewcommand\footnote[1]{%
    \oldFootnote{#1}\futurelet\nextToken\isFootnote}
\newcommand\isFootnote{%
    \ifx\footnote\nextToken\textsuperscript{,}\fi}
\begin{document}

\title{ \sffamily  
  On the asymptotic behaviour of the
  Arag\'on Artacho--Campoy algorithm
 }

\author{
  	Salihah Alwadani\thanks{
  		Mathematics, University of British Columbia, Kelowna, B.C.\ V1V~1V7, Canada. 
  		Email: \href{mailto:
		salihahalwadani@hotmail.com}{\texttt{salihahalwadani@hotmail.com}}.},~
  	Heinz H.\ Bauschke\thanks{
  		Mathematics, University of British Columbia, Kelowna, B.C.\ V1V~1V7, Canada. 
  		Email: \href{mailto: heinz.bauschke@ubc.ca}{\texttt{heinz.bauschke@ubc.ca}}.},~
  	Walaa M. Moursi\thanks{Electrical Engineering, 
    Stanford University, Stanford, CA 94305, USA and
    Mansoura University, Faculty of Science, Mathematics
    Department, Mansoura 35516, Egypt. 
  Email: \href{mailto:
  wmoursi@stanford.edu}{\texttt{wmoursi@stanford.edu}}.},~
  	and Xianfu Wang\thanks{
  	Mathematics, University of British Columbia, Kelowna, B.C.\ V1V~1V7, Canada. 
  	Email: \href{mailto: shawn.wang@ubc.ca}{\texttt{shawn.wang@ubc.ca}}.}
}

\date{May~28, 2018}


\maketitle

\begin{abstract}
\noindent
Arag\'on Artacho and Campoy recently proposed a new method
for computing the projection onto the intersection of two closed
convex sets in Hilbert space; moreover, they proposed in 2018 
a generalization from normal cone operators to maximally monotone
operators. 
In this paper, we complete this analysis by demonstrating that 
the underlying curve converges to the nearest zero of the
sum of the two operators. 
We also provide a new interpretation of the underlying operators
in terms of the resolvent and the proximal average. 
\end{abstract}
{\small
\noindent
{\bfseries 2010 Mathematics Subject Classification:}
{Primary 
  47H05, 90C25; 
Secondary 
  47H09, 49M27, 65K05, 65K10
}

\noindent {\bfseries Keywords:}
Arag\'on Artacho--Campoy algorithm, 
convex function, 
Douglas--Rachford algorithm,
maximally monotone operator,
Peaceman--Rachford algorithm,
projection,
proximal average, 
proximal operator, 
resolvent,
resolvent average, 
zero the sum. 
}


\section{Introduction}
Throughout this note, 
\begin{empheq}[box=\mybluebox]{equation}
    \label{eq:Hilbert.space}
    \text{$X$ is a real Hilbert space}
\end{empheq}
with inner product $\scal{\, \cdot}{\cdot\,} $
and associated norm $\norm{\, \cdot \, } $.
The notation of our paper 
is standard and mainly follows \cite{BC} to which we also
refer to basic results on convex analysis and monotone operator
theory. 
A central problem is to find a zero (critical point) of the sum
of two maximally monotone operators. 
The Douglas--Rachford and Peaceman--Rachford algorithms (see
Fact~\ref{f:DRPR} below) are classical approaches to solve this
problem. If the monotone operators are normal cone operators of
closed convex nonempty subsets of $X$, then one obtains a
feasibility problem. Suppose, however, that we are interested in
finding the nearest point in the intersection. One may then apply
several classical best approximation 
algorithms (see, e.g., \cite[Chapter~30]{BC}).
In the recently published paper \cite{Fran1}, Arag\'on
Artacho and Campoy presented a novel algorithm, which we term the
\emph{Aarag\'on Artacho--Campoy Algorithm (AACA)} to solve this best
approximation problem. Even more recently, they extended this
algorithm in \cite{Fran2} 
to deal with general maximally monotone operators. 

{\em The aim of this paper is to re-derive the AACA from the view
point of the proximal and resolvent average. We also complete
their analysis by describing the asymptotic behaviour of the
underlying curve.}

This note is organized as follows. 
In Section~\ref{sec:aux}, we collect a few facts and results that
will make the subsequent analysis more clear. 
Section~\ref{sec:main} contains a new variant
of a convergence result for AACA (Theorem~\ref{t:AACA}) as well
as the announced asymptotic behaviour of the curve
(Theorem~\ref{t:dichotomy}).

\section{Auxiliary results}

\label{sec:aux}

\begin{fact}[Douglas--Rachford and Peaceman--Rachford]
\label{f:DRPR}
Let $A$ and $B$ be maximally monotone on $X$. 
Suppose that 
$\zer(A+B) = (A+B)^{-1}(0)\neq\varnothing$, let 
$\lambda\in\left]0,1\right]$, and set 
\begin{equation}
T = (1-\lambda)\Id + \lambda R_BR_A, 
\end{equation}
where $J_A = (\Id+A)^{-1}$ and $R_A = 2J_A-\Id$. 
Let $x_0\in X$ and define
\begin{equation}
(\forall n\in\NN)\quad x_{n+1} = Tx_n.
\end{equation}
Then 
there exists $\bar{x}\in\Fix T$ such that
$\bar{z} = J_A\bar{x}\in \zer(A+B)$ and the following hold:
\begin{enumerate}
\item 
If $A$ or $B$ is strongly monotone, then 
$\zer(A+B) = \{\bar{z}\}$. 
\item
If $\lambda <1$, then 
$x_n\weakly \bar{x}$ and $J_Ax_n\weakly \bar{z}$. 
\item 
If $\lambda<1$ and 
$A$ or $B$ is strongly monotone, then 
$J_Ax_n\to \bar{z}$. 
\item
If $\lambda=1$ and $A$ is strongly monotone,
then $J_Ax_n\to\bar{z}$. 
\end{enumerate}
\end{fact}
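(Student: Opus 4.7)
The plan is to prove all four items via the classical correspondence $x \in \Fix(R_B R_A) \iff J_A x \in \zer(A+B)$, valid for every $\lambda \in (0,1]$ since $Tx = x \iff R_B R_A x = x$. For the forward direction, $R_B R_A x = x$ forces $J_A x = J_B R_A x$; combined with $x - J_A x \in A(J_A x)$ and $R_A x - J_B R_A x \in B(J_B R_A x)$, addition yields $0 \in (A+B)(J_A x)$. The converse is analogous, so the existence of $\bar x$ follows from $\zer(A+B) \neq \varnothing$. Item (i) is then immediate, because strong monotonicity of either $A$ or $B$ makes $A+B$ strongly monotone, and a strongly monotone operator admits at most one zero.

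For item (ii) with $\lambda < 1$, the operator $T$ is an average of $\Id$ and the nonexpansive map $R_B R_A$, so Krasnosel'ski\u{\i}--Mann gives $x_n \weakly \bar x \in \Fix T$ along with $x_n - R_B R_A x_n \to 0$. To deduce $J_A x_n \weakly \bar z$, I would use firm nonexpansiveness of $J_A$ to bound $(J_A x_n)$ and the identity $x - R_B R_A x = 2(J_A x - J_B R_A x)$ to force $J_A x_n - J_B R_A x_n \to 0$. For any weakly convergent subsequence $J_A x_{n_k} \weakly y^*$, the enclosures $x_{n_k} - J_A x_{n_k} \in A(J_A x_{n_k})$ and $R_A x_{n_k} - J_B R_A x_{n_k} \in B(J_B R_A x_{n_k})$, whose sum equals $J_A x_{n_k} - J_B R_A x_{n_k} \to 0$ strongly, together with maximal monotonicity of $A$ and $B$, identify $y^* = \bar z$.

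For items (iii) and (iv), the computational engine is the inequality
\begin{equation*}
\|x-y\|^2 - \|R_A x - R_A y\|^2 \geq 4\alpha \|J_A x - J_A y\|^2,
\end{equation*}
valid whenever $A$ is $\alpha$-strongly monotone; this is a short computation from $R_A = 2J_A - \Id$. Combining it with nonexpansiveness of $R_B$, setting $y = \bar x$, and expanding $\|Tx_n - \bar x\|^2$ via the standard convex-combination identity yields
\begin{equation*}
\|x_{n+1} - \bar x\|^2 \leq \|x_n - \bar x\|^2 - 4\alpha\lambda \|J_A x_n - \bar z\|^2 - \lambda(1-\lambda)\|x_n - R_B R_A x_n\|^2.
\end{equation*}
Telescoping gives $\sum_n \|J_A x_n - \bar z\|^2 < \infty$, hence $J_A x_n \to \bar z$ strongly. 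This covers (iv) and the $A$-strongly-monotone case of (iii) uniformly. For the $B$-strongly-monotone case of (iii), the analogous inequality applied to $R_B$ yields $J_B R_A x_n \to \bar z$ strongly; since $\lambda < 1$ forces $x_n - R_B R_A x_n \to 0$, the identity $J_A x_n - J_B R_A x_n = (x_n - R_B R_A x_n)/2$ upgrades this to $J_A x_n \to \bar z$.

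The principal obstacle is the shadow weak convergence in item (ii): firm nonexpansiveness is not weak-to-weak sequentially continuous in infinite dimensions, so one cannot avoid the Minty-type use of maximal monotonicity that exploits the strong convergence $J_A x_n - J_B R_A x_n \to 0$. The quantitative parts (iii) and (iv) are comparatively routine once the single strong-monotonicity inequality above is in hand.
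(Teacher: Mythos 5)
Your proposal is correct in outline, but it necessarily takes a different route from the paper, because the paper does not prove this Fact at all: its ``proof'' is a citation of \cite[Theorem~26.11 and Proposition~26.13]{BC} (see also \cite{PLC09}), whereas you reconstruct the standard arguments behind those results. Your fixed-point correspondence, the Krasnosel'ski\u{\i}--Mann step for $\lambda<1$, and the strong-monotonicity inequality $\|x-y\|^2-\|R_Ax-R_Ay\|^2\geq 4\alpha\|J_Ax-J_Ay\|^2$ (which indeed reduces to $\scal{J_Ax-J_Ay}{(x-J_Ax)-(y-J_Ay)}\geq\alpha\|J_Ax-J_Ay\|^2$) are all sound, and the resulting Fej\'er-type estimate with the extra term $4\alpha\lambda\|J_Ax_n-\bar z\|^2$ correctly delivers (iii) and (iv), including your correct restriction to $\lambda<1$ when only $B$ is strongly monotone. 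The one step you compress too much is the identification of the weak cluster point in (ii): there you have $(J_Ax_{n_k},\,x_{n_k}-J_Ax_{n_k})\in\operatorname{gra}A$ and $(J_BR_Ax_{n_k},\,R_Ax_{n_k}-J_BR_Ax_{n_k})\in\operatorname{gra}B$ with only \emph{weak} convergence in both coordinates, so the usual weak--strong closedness (demiclosedness) of a single graph does not apply; what rescues the argument is that both the difference of the points and the sum of the operator values equal $J_Ax_{n_k}-J_BR_Ax_{n_k}\to 0$ strongly, after which one passes to the limit through the maximally monotone product operator $A\times B$ on $X\times X$ -- this is precisely \cite[Proposition~26.5]{BC}, the engine inside the proof of Theorem~26.11, and it should be invoked or reproved explicitly. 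With that lemma in place your argument is complete; what your route buys over the paper's citation is a self-contained proof plus the quantitative byproduct $\sum_{n}\|J_Ax_n-\bar z\|^2<\infty$ in the strongly monotone cases.
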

\begin{proof}
This follows from \cite[Theorem~26.11 and Proposition~26.13]{BC}.
See also \cite{PLC09}. 
\end{proof}

The proof of the following result, which is a slight
generalization of \cite[Proposition~3.1]{Fran2}, 
is straight-forward and hence omitted. 
\begin{proposition}
\label{p:resav}
Let $C$ be maximally monotone on $X$, let $w\in X$, 
let 
$\gamma \in\left]0,1\right]$, and 
set
\begin{equation}
C_\gamma \colon X\To X\colon x\mapsto 
C(\gamma^{-1}(x-(1-\gamma)w))+(1-\gamma)\gamma^{-1}(x-w).
\end{equation}
Then  $C_\gamma$ is maximally monotone and its resolvent is given
by 
\begin{equation}
J_{C_\gamma}\colon X\to X\colon x\mapsto  \gamma J_{C}x+(1-\gamma)w.
\end{equation}
\end{proposition}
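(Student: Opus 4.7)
The plan is to define $T\colon X \to X$ by
\[T(x) := \gamma J_C x + (1-\gamma) w\]
and to show that $T$ is the resolvent of the operator $C_\gamma$; this will simultaneously yield both the explicit formula for $J_{C_\gamma}$ and the maximal monotonicity of $C_\gamma$.

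First, I would verify that $T$ is the resolvent of some maximally monotone operator. Since $C$ is maximally monotone, $J_C$ is firmly nonexpansive with full domain $X$, and a short direct calculation (using $\|T(x)-T(y)\|^2 = \gamma^2\|J_Cx-J_Cy\|^2 \leq \gamma\scal{x-y}{J_Cx-J_Cy} = \scal{x-y}{T(x)-T(y)}$ for $\gamma\in\left]0,1\right]$) shows that $T$ remains firmly nonexpansive with full domain. By the standard correspondence between firmly nonexpansive mappings with full domain and resolvents of maximally monotone operators (see, e.g., \cite[Corollary~23.9]{BC}), there exists a \textbf{unique} maximally monotone operator $A$ on $X$ with $J_A = T$, and it is given by $A = T^{-1} - \Id$.

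Second, I would compute $A$ and verify $A = C_\gamma$. For $y \in X$, the equation $T(x) = y$ is equivalent to $J_C x = \gamma^{-1}(y - (1-\gamma)w)$; setting $u := \gamma^{-1}(y - (1-\gamma)w)$ and using $J_C^{-1} = \Id + C$ gives $x \in u + Cu$, whence
\[T^{-1}(y) = \gamma^{-1}(y - (1-\gamma)w) + C\bigl(\gamma^{-1}(y - (1-\gamma)w)\bigr).\]
The algebraic identity $\gamma^{-1}(y - (1-\gamma)w) - y = (1-\gamma)\gamma^{-1}(y - w)$ then yields
\[A(y) = T^{-1}(y) - y = C\bigl(\gamma^{-1}(y - (1-\gamma)w)\bigr) + (1-\gamma)\gamma^{-1}(y - w) = C_\gamma(y),\]
as required.

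There is no real obstacle; the whole argument reduces to a routine algebraic identity. The only conceptual point worth flagging is that, rather than checking the monotonicity inequality for $C_\gamma$ by hand and then verifying maximality separately, one should exploit the one-to-one correspondence between firmly nonexpansive self-maps of $X$ and resolvents of maximally monotone operators, which delivers monotonicity, maximality, and the resolvent formula in a single stroke; this is almost certainly why the authors call the proof ``straight-forward.''
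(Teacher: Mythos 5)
Your proof is correct, and since the paper explicitly omits its own proof of this proposition (declaring it straightforward, a slight generalization of \cite[Proposition~3.1]{Fran2}), there is nothing to diverge from: your route via the Minty correspondence---checking that $T=\gamma J_C+(1-\gamma)w$ is firmly nonexpansive with full domain, hence the resolvent of the unique maximally monotone operator $T^{-1}-\Id$, and then computing $T^{-1}-\Id=C_\gamma$ algebraically---is exactly the kind of argument the authors have in mind (it is also the observation behind Remark~\ref{r:reproav}, since $T$ is the convex combination of the resolvents $J_C$ and $P_{\{w\}}$). The only trivial caveat is to double-check the precise result number in \cite{BC} for the firmly-nonexpansive/maximally-monotone correspondence; the argument itself is complete.
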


\begin{remark}[resolvent and proximal average]
\label{r:reproav}
Consider the setting of Proposition~\ref{p:resav}. 
Because $J_{C_\gamma}$ is a convex combination of 
the resolvents $J_C$ and $P_{\{w\}}$, we see that
$C_\gamma$ is nothing but a \emph{resolvent average}
of $C$ and $N_{\{w\}}$.
See \cite{BBMW} for a detailed study of resolvent averages.
We note that if $C$ is $\sigma_C$-monotone, i.e., $C-\sigma_C\Id$
is monotone, then 
\begin{equation}
\label{e:0525a}
\text{$C_\gamma$ is $\gamma^{-1}(\sigma_C+1-\gamma)$-monotone.}
\end{equation}
This can be verified directly (as in \cite[Proposition~3.1]{Fran2}) 
or it also follows from \cite[Theorem~3.20]{BBMW}. 

Now suppose that additionally $C=\partial h$ for some proper lower
semicontinuous convex function $h$ on $X$.
Then $C_\gamma = \partial h_\gamma$ and $J_{C_\gamma}= \prox{h_\gamma}$, 
where 
\begin{subequations}
\begin{align}
h_\gamma
\colon X&\to\RX\\
x&\mapsto 
\inf\mmenge{\gamma
h(y_1)+(1-\gamma)\iota_{\{w\}}(y_2)+\frac{\gamma(1-\gamma)}{2}\|y_1-y_2\|^2}{\gamma
y_1+(1-\gamma)y_2=x}\\
&= \gamma
h\big(\gamma^{-1}(x-(1-\gamma)w)\big)+
\frac{\gamma(1-\gamma)}{2}\|\gamma^{-1}(x-(1-\gamma)w)-w\|^2\\
&= \gamma
h\big(\gamma^{-1}(x-(1-\gamma)w)\big)+
\frac{1-\gamma}{2\gamma}\|x-w\|^2
\end{align}
\end{subequations}
is the \emph{proximal average} of $h$ and
$\iota_{\{w\}}$. 
See \cite{BGLW} and the reference therein 
for more on the proximal average. 
\end{remark}

\section{The Arag\'on Artacho--Campoy algorithm (AACA)}

\label{sec:main}

From now on, we suppose that 
\begin{empheq}[box=\mybluebox]{equation}
    \text{$A$ and $B$ are maximally monotone on $X$, $w\in X$,
    and $\gamma\in\zeroun$.}
\end{empheq}
Let $\sigma_A\geq 0$ and $\sigma_B\geq 0$ be such that
\begin{empheq}[box=\mybluebox]{equation}
\text{$A-\sigma_A\Id$ and $B-\sigma_B\Id$ are monotone,}
\end{empheq}
and we also assume that 
\begin{empheq}[box=\mybluebox]{equation}
\label{eq:A+Bmm}
    \text{$A+B$ is maximally monotone}
\end{empheq}
which will make all results more tidy.
(See also Remark~\ref{r:AACA} below.)
Next, as in Remark~\ref{r:reproav},
we introduce the resolvent averages between
$A,B$ and $N_{\{w\}}$:
\begin{empheq}[box=\mybluebox]{equation}
A_\gamma \colon X\To X\colon x\mapsto
A\big(\gamma^{-1}(x-(1-\gamma)w)\big)+\gamma^{-1}(1-\gamma)(x-w)
\end{empheq}
and 
\begin{empheq}[box=\mybluebox]{equation}
B_\gamma \colon X\To X\colon x\mapsto 
B\big(\gamma^{-1}(x-(1-\gamma)w)\big)+\gamma^{-1}(1-\gamma)(x-w).
\end{empheq}

\begin{proposition}
\label{p:0525b}
The following hold true:
\begin{enumerate}
\item
\label{p:0525bi}
$A_\gamma$, $B_\gamma$, and $A_\gamma+B_\gamma$ are maximally
monotone. 
\item 
\label{p:0525bii}
$A_\gamma$, $B_\gamma$, and $A_\gamma+B_\gamma$ are strongly monotone,
with constants
$\gamma^{-1}(\sigma_A+1-\gamma)$, 
$\gamma^{-1}(\sigma_B+1-\gamma)$, 
and $\gamma^{-1}(\sigma_A+\sigma_B+2-2\gamma)$, respectively. 
\item
\label{p:0525biii}
$\zer(A_\gamma+B_\gamma)$ is nonempty and a singleton. 
\end{enumerate}
\end{proposition}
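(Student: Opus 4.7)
My plan is to treat the three items in order, leveraging \cref{p:resav} and the strong monotonicity calculation from \eqref{e:0525a}, together with the standing assumption~\eqref{eq:A+Bmm} that $A+B$ is maximally monotone. The whole argument is essentially bookkeeping; no step is genuinely difficult, and if there is a subtle point, it is making sure the sum $A_\gamma+B_\gamma$ is treated correctly, since it is \emph{not} equal to $(A+B)_\gamma$ (there is an extra copy of the linear drift term).

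For \cref{p:0525bi}, I would apply \cref{p:resav} with $C:=A$ and $C:=B$, which immediately yields that $A_\gamma$ and $B_\gamma$ are maximally monotone. For the sum, the key observation is the identity
\begin{equation}
A_\gamma + B_\gamma \;=\; (A+B)_\gamma \;+\; \gamma^{-1}(1-\gamma)(\Id - w),
\end{equation}
which is immediate from the definitions. Applying \cref{p:resav} with $C:=A+B$ (allowed by \eqref{eq:A+Bmm}) shows $(A+B)_\gamma$ is maximally monotone; adding the continuous monotone affine mapping $x\mapsto \gamma^{-1}(1-\gamma)(x-w)$, which has full domain, preserves maximal monotonicity (e.g.\ by \cite[Corollary~25.5]{BC}).

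For \cref{p:0525bii}, I would invoke \eqref{e:0525a} with $C=A$ and $C=B$ to conclude that $A_\gamma$ is $\gamma^{-1}(\sigma_A+1-\gamma)$-monotone and $B_\gamma$ is $\gamma^{-1}(\sigma_B+1-\gamma)$-monotone. Because $\gamma\in\zeroun$ and $\sigma_A,\sigma_B\geq 0$, both constants are strictly positive, so both operators are strongly monotone. The sum of two strongly monotone operators is strongly monotone with constant equal to the sum of the constants, giving $\gamma^{-1}(\sigma_A+\sigma_B+2-2\gamma)$ for $A_\gamma+B_\gamma$.

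Finally, for \cref{p:0525biii}, I combine the two previous items: since $A_\gamma+B_\gamma$ is both maximally monotone and strongly monotone with a strictly positive constant, it is surjective (a standard consequence of Minty's theorem, see e.g.\ \cite[Corollary~23.37]{BC}). In particular $0\in\ran(A_\gamma+B_\gamma)$, so $\zer(A_\gamma+B_\gamma)$ is nonempty; and strong monotonicity forces it to be a singleton, which completes the proof.
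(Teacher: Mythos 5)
Your proof is correct and follows essentially the same route as the paper: item \ref{p:0525bii} from \eqref{e:0525a}, and item \ref{p:0525biii} from maximal plus strong monotonicity via \cite[Corollary~23.37]{BC}. The only difference is that where the paper dismisses \ref{p:0525bi} as ``clear,'' you spell out the useful identity $A_\gamma+B_\gamma=(A+B)_\gamma+\gamma^{-1}(1-\gamma)(\Id-w)$, correctly using the standing assumption \eqref{eq:A+Bmm} to get maximality of the sum rather than appealing to a constraint qualification.
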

\begin{proof}
\ref{p:0525bi}: Clear.
\ref{p:0525bii}: This follows from \eqref{e:0525a}.
\ref{p:0525biii}: 
Items \ref{p:0525bi} and \ref{p:0525bii} imply 
that $A_\gamma+B_\gamma$ is maximally monotone
and strongly monotone.
Now apply 
\cite[Corollary~23.37(ii)]{BC}. 
\end{proof}

In view of Proposition~\ref{p:0525b}\ref{p:0525biii}, 
we denote the
unique point in $\zer(A_\gamma+B_\gamma)$ by $z_\gamma$:
\begin{empheq}[box=\mybluebox]{equation}
\label{e:zgamma}
\zer(A_\gamma+B_\gamma)=\{z_\gamma\}. 
\end{empheq}

We now essentially re-derive 
the central convergence result of
Arag\'on--Artacho and Campoy \cite[Theorem~3.1]{Fran2}:

\begin{theorem}[AACA for fixed $\gamma$]
\label{t:AACA}
Given $x_0\in X$ and $\lambda\in\left]0,1\right]$,
define the sequence $(x_n)_{n\in\NN}$ via 
\begin{equation}
(\forall n\in\NN)\quad x_{n+1} = (1-\lambda)x_n+\lambda 
\big(2\gamma J_B + 2(1-\gamma)w-\Id\big)\circ
\big(2\gamma J_A + 2(1-\gamma)w-\Id\big)x_n.
\end{equation}
Then there exists $\bar{x}\in\Fix(R_{B_\gamma}R_{A_\gamma})$
such that $x_n\weakly \bar{x}$ and
$\gamma J_Ax_n+(1-\gamma)w \to z_\gamma$. 
\end{theorem}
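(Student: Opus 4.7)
The plan is to observe that the iteration in the theorem is exactly the Douglas--Rachford/Peaceman--Rachford scheme applied to the pair $(A_\gamma, B_\gamma)$, after which the conclusion is a direct application of Fact~\ref{f:DRPR}.

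First, Proposition~\ref{p:resav} supplies the resolvent formulas $J_{A_\gamma}x = \gamma J_A x + (1-\gamma)w$ and $J_{B_\gamma}x = \gamma J_B x + (1-\gamma)w$. Doubling and subtracting the identity yields
\begin{equation*}
R_{A_\gamma} = 2\gamma J_A + 2(1-\gamma)w - \Id \quad\text{and}\quad R_{B_\gamma} = 2\gamma J_B + 2(1-\gamma)w - \Id.
\end{equation*}
Hence the composition appearing in the theorem's update rule is precisely $R_{B_\gamma}\circ R_{A_\gamma}$, and the iteration reads $x_{n+1} = (1-\lambda)x_n + \lambda R_{B_\gamma}R_{A_\gamma}x_n$.

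Second, Proposition~\ref{p:0525b} supplies every hypothesis required to apply Fact~\ref{f:DRPR} with $(A,B)$ replaced by $(A_\gamma,B_\gamma)$: both operators are maximally monotone, $A_\gamma$ is strongly monotone, and $\zer(A_\gamma+B_\gamma) = \{z_\gamma\}$ is a nonempty singleton. The preamble of Fact~\ref{f:DRPR} then produces $\bar{x}\in\Fix(R_{B_\gamma}R_{A_\gamma})$ satisfying $J_{A_\gamma}\bar{x} = z_\gamma$. For $\lambda\in\left]0,1\right[$, item~(ii) yields $x_n\weakly\bar{x}$ and item~(iii) (using the strong monotonicity of $A_\gamma$) yields $J_{A_\gamma}x_n\to z_\gamma$ in norm; for $\lambda = 1$, item~(iv) delivers $J_{A_\gamma}x_n\to z_\gamma$ in norm directly. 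In every case, rewriting $J_{A_\gamma}x_n = \gamma J_A x_n + (1-\gamma)w$ produces the second claimed convergence.

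The main conceptual step is therefore just the recognition that AACA is nothing but DR/PR applied to the resolvent-averaged operators $A_\gamma, B_\gamma$, a reformulation already anticipated in Remark~\ref{r:reproav}. The only delicate point is the Peaceman--Rachford regime $\lambda = 1$: Fact~\ref{f:DRPR}(iv) on its own gives strong convergence of the shadow sequence $J_{A_\gamma}x_n$ but not directly weak convergence of $x_n$ itself. Extracting $x_n\weakly\bar{x}$ in that case relies on the fact that \emph{both} $A_\gamma$ and $B_\gamma$ are strongly monotone, together with the update identity $x_{n+1}=2J_{B_\gamma}R_{A_\gamma}x_n - R_{A_\gamma}x_n$. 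Apart from this subtlety, the argument is a direct translation of the classical DR/PR theory packaged in Fact~\ref{f:DRPR}.
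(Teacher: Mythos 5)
Your proposal is exactly the paper's proof: Proposition~\ref{p:resav} identifies the iteration as the relaxed Douglas--Rachford/Peaceman--Rachford scheme $x_{n+1}=(1-\lambda)x_n+\lambda R_{B_\gamma}R_{A_\gamma}x_n$ for the pair $(A_\gamma,B_\gamma)$, and Proposition~\ref{p:0525b} supplies the hypotheses needed to invoke Fact~\ref{f:DRPR}. Your closing caveat about $\lambda=1$ is well taken and in fact slightly more careful than the paper, whose one-line appeal to Fact~\ref{f:DRPR} does not literally yield $x_n\weakly\bar{x}$ in the Peaceman--Rachford case; there one can argue as you indicate, e.g.\ via $x_{n+1}-x_n=2\big(J_{B_\gamma}R_{A_\gamma}x_n-J_{A_\gamma}x_n\big)\to 0$ (both shadow-type sequences converge strongly to $z_\gamma$ by strong monotonicity) combined with Fej\'er monotonicity with respect to $\Fix(R_{B_\gamma}R_{A_\gamma})$ and demiclosedness.
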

\begin{proof}
On the one hand, by Proposition~\ref{p:resav},
\begin{equation}
J_{A_\gamma} = \gamma J_A+(1-\gamma)w
\quad\text{and}\quad
J_{B_\gamma} = \gamma J_B+(1-\gamma)w
\end{equation}
which implies
\begin{equation}
R_{A_\gamma} = 2\gamma J_A + 2(1-\gamma)w-\Id
\quad\text{and}\quad
R_{B_\gamma} = 2\gamma J_B + 2(1-\gamma)w-\Id
\end{equation}
and further
\begin{equation}
R_{B_\gamma}R_{A_\gamma}
= \big(2\gamma J_B + 2(1-\gamma)w-\Id\big)\circ
\big(2\gamma J_A + 2(1-\gamma)w-\Id\big)
\end{equation}
On the other hand,
both $A_\gamma$ and $B_\gamma$ are strongly monotone 
with constant $\gamma^{-1}(1-\gamma)$.
Altogether, the result follows from 
Fact~\ref{f:DRPR} applied to $(A_\gamma,B_\gamma)$ instead of
$(A,B)$. 
\end{proof}

\begin{remark} 
\label{r:AACA}
Several comments regarding Theorem~\ref{t:AACA}
are in order.
\begin{enumerate}
\item We have opted for a more explicit and thus easier-to-use
version of AACA where the effect of $w$ is explicitly recorded. 
\item 
While one could make $\lambda$ depending on $n$ as in
\cite{Fran2}, we decided
instead to stress the new case when $\lambda=1$, corresponding to the
Peaceman--Rachford version and notably absent in \cite{Fran2}. 
This case deserves interest because it turned out to be 
the best parameter choice in 
\cite{BBK}. 
\item
Our assumption of maximal monotonicity makes for a tidy
theory. It is used chiefly to guarantee the existence of each 
$z_\gamma$;
in \cite{Fran2}, this is replaced by some condition regarding the
existence of $z_\gamma$ which seems to be not so easy to check in
practice. 
\item
One may apply Theorem~\ref{t:AACA} in a standard 
product space setting to handle the sum of finitely many
maximally monotone operators via AACA, as done in \cite{Fran2}. 
\end{enumerate}
\end{remark}

Of course, the remaining key question is: 
\begin{center}
{\em What is the behaviour
when $\gamma\to 1^-$ for AACA?}
\end{center}
While this was answered in some form in \cite{Fran1} when $A$ and
$B$ are normal cone operators, no result was offered in
\cite{Fran2}. We conclude this paper by providing a complete and
satisfying answer, relying on tools by Combettes and Hirstoaga
\cite{CH06} and \cite{CH08}, packed also into
\cite[Theorem~23.44]{BC}. 

\begin{theorem}[dichotomy for AACA when $\gamma\to 1^-$]
\label{t:dichotomy}
Let $z_\gamma$ be as in \eqref{e:zgamma}.
Then exactly one of the following holds:
\begin{enumerate}
\item
\label{t:dichotomyi}
$\zer(A+B)\neq\varnothing$ and $z_\gamma\to P_{\zer(A+B)}w$ as
$\gamma\to1^-$. 
\item
\label{t:dichotomyii}
$\zer(A+B)=\varnothing$ and $\|z_\gamma\|\to\infty$ as $\gamma\to
1^-$. 
\end{enumerate}
\end{theorem}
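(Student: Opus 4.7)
The plan is to reduce the inclusion defining $z_\gamma$ to a Tikhonov-type regularized inclusion for $A+B$ and then invoke the Combettes--Hirstoaga dichotomy, packaged as \cite[Theorem~23.44]{BC}. To this end, I would introduce the substitution $y_\gamma := \gamma^{-1}(z_\gamma-(1-\gamma)w)$, equivalently $z_\gamma = \gamma y_\gamma + (1-\gamma)w$, which also yields $z_\gamma - w = \gamma(y_\gamma-w)$. Plugging in, the definitions of $A_\gamma$ and $B_\gamma$ collapse to $(A_\gamma+B_\gamma)(z_\gamma) = (A+B)(y_\gamma) + 2(1-\gamma)(y_\gamma-w)$, so the defining inclusion $0\in(A_\gamma+B_\gamma)(z_\gamma)$ of \eqref{e:zgamma} is equivalent to
\[
0\in (A+B)(y_\gamma) + 2(1-\gamma)(y_\gamma-w).
\]

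Setting $\varepsilon:=2(1-\gamma)\to 0^+$, this is exactly the Browder--Tikhonov regularization of the maximally monotone operator $A+B$ anchored at $w$; equivalently, $y_\gamma = J_{\varepsilon^{-1}(A+B)}w$. Applying \cite[Theorem~23.44]{BC} to $A+B$ then produces the dichotomy at the $y$-level: if $\zer(A+B)\neq\varnothing$, then $y_\gamma \to P_{\zer(A+B)}w$ as $\gamma\to 1^-$; otherwise $\|y_\gamma\|\to\infty$.

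It remains to translate back via $z_\gamma = \gamma y_\gamma + (1-\gamma)w$. In case~\ref{t:dichotomyi}, the convergence of $y_\gamma$ together with $\gamma\to 1$ and $(1-\gamma)w\to 0$ yields $z_\gamma\to P_{\zer(A+B)}w$; in case~\ref{t:dichotomyii}, the estimate $\|z_\gamma\|\geq \gamma\|y_\gamma\|-(1-\gamma)\|w\|$ forces $\|z_\gamma\|\to\infty$. I do not foresee a serious obstacle: the essential maneuver is the change of variables, which converts the ostensibly elaborate operator $A_\gamma+B_\gamma$ into a standard Tikhonov regularization of $A+B$. The factor $2$ in $2(1-\gamma)$ is harmless because the dichotomy is invariant under positive rescaling of the regularization parameter, and the only hypothesis really needed is that $A+B$ be maximally monotone, which is part of the standing assumptions~\eqref{eq:A+Bmm}.
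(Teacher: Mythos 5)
Your proposal is correct and is essentially identical to the paper's own proof: the same change of variables $y_\gamma=\gamma^{-1}\bigl(z_\gamma-(1-\gamma)w\bigr)$ turning $0\in(A_\gamma+B_\gamma)(z_\gamma)$ into the Tikhonov-type inclusion $0\in(A+B)(y_\gamma)+2(1-\gamma)(y_\gamma-w)$, followed by the Combettes--Hirstoaga dichotomy as packaged in \cite[Theorem~23.44]{BC} and the trivial translation back to $z_\gamma$. Your added remarks on the harmless factor $2$ and the lower bound $\|z_\gamma\|\geq\gamma\|y_\gamma\|-(1-\gamma)\|w\|$ only make explicit what the paper leaves implicit.
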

\begin{proof}
Set $\delta=2(1-\gamma)$ and note that
$\delta \to 0^+$ $\Leftrightarrow$ $\gamma\to 1^-$. 
Moreover, set 
\begin{equation}
y_\delta = \gamma^{-1}\big(z_\gamma-(1-\gamma)w\big).
\end{equation}
We have, by definition of $z_\gamma$ and $y_\delta$,
\begin{align}
0\in (A_\gamma+B_\gamma)(z_\gamma) = (A+B)y_\delta +
\delta(y_\delta-w).
\end{align}
Two cases are now conceivable.

\emph{Case~1:} $\zer(A+B)\neq\varnothing$.
By \cite[Theorem~23.44(i)]{BC}, we have
\begin{equation}
\lim_{\delta\to 0^+} y_\delta = P_{\zer(A+B)}w;
\end{equation}
or equivalently, $\lim_{\gamma\to 1^-} z_\gamma =
P_{\zer(A+B)}w$. 

\emph{Case~2:} $\zer(A+B)=\varnothing$.
By \cite[Theorem~23.44(ii)]{BC}, we have
\begin{equation}
\lim_{\delta\to 0^+} \|y_\delta\| = +\infty;
\end{equation}
or equivalently, $\lim_{\gamma\to 1^-} \|z_\gamma\| =
+\infty$. 

Altogether, the proof is complete. 
\end{proof}

\begin{remark}
\label{r:dichotomy}
Here are some comments on Theorem~\ref{t:dichotomy}. 
\begin{enumerate}
\item
The information presented in
Theorem~\ref{t:dichotomy}\ref{t:dichotomyii} is new even when $A$
and $B$ are normal cone operators as in \cite{Fran1}. 
\item
Computing $P_{\zer(A+B)}w$ via Theorem~\ref{t:dichotomy} is
cumbersome and ``doubly iterative'':
one must first employ an algorithm to find $z_\gamma$, and the
let $\gamma$ tend to $1^-$.
There are, however, some results that allow us to avoid this double
iteration and instead solve the problem via a single iteration;
see, e.g.,
the discussion in \cite[Section~8]{BBHM}. 
\end{enumerate}
\end{remark}

Let us conclude with a simple example.

\begin{example}
Suppose that $A=P_U$, where $U$ is a closed linear subspace of
$X$, and $B\equiv -v$, where $v\in U^\perp$. 
Then $\zer(A+B) = U^\perp$, if $v=0$;
$\zer(A+B)=\varnothing$, if $v\neq 0$. 
Let $w=0\in X$, and let 
$\gamma\in\left]0,1\right[$.
Then $(\forall x\in X)$ $A_\gamma x = \gamma^{-1}(P_U(x)+(1-\gamma)x)$ and
$B_\gamma x = -v+\gamma^{-1}(1-\gamma)x$. 
Hence $\zer(A_\gamma+B_\gamma) = \{z_\gamma\}$, where
$z_\gamma = (2(1-\gamma))^{-1}\gamma v$. 

\emph{Case~1}: $v=0$.
Then $z_\gamma\equiv 0 \to 0 = P_{\zer(A+B)}(w)$.

\emph{Case~2}: $v\neq 0$.
Then $\|z_\gamma\|=(2(1-\gamma))^{-1}\gamma\|v\|\to+\infty$.

Both cases illustrate Theorem~\ref{t:dichotomy}. 
\end{example}

\section*{Acknowledgments}{   
	SA was partially supported by 
	Saudi Arabian Cultural Bureau. 
	HHB and XW were partially 
	supported by NSERC Discovery Grants. 
	WMM was partially supported by NSERC Postdoctoral
	Fellowship. 
}


\end{document}